\def\R{\mathbb{R}}
\def\E{\mathcal{E}} 
\def\Ea{\mathcal{E}_{\rm ave}} 
\newtheorem{theorem}{\indent {Theorem}}
\begin{document}

%
\title{On (Non)Supermodularity of Average Control Energy}
%
%
%

\author{Alex~Olshevsky
\thanks{A. Olshevsky is with the Department of Electrical and Computer Engineering and the Division of Systems Engineering, Boston University, 8 St. Mary's St, Boston, MA, 02215, {\tt alexols@bu.edu}}
\thanks{This research was supported by NSF grant ECCS-1351684.}}

\maketitle

\begin{abstract}
 Given a linear system, we consider the expected energy to move from the origin to a uniformly random point on the unit sphere as a function of the set of actuated variables. We show this function is not necessarily supermodular, correcting some claims in the existing literature.
\end{abstract}

%
\IEEEpeerreviewmaketitle

\section{Introduction} This paper is concerned with a  property of the actuator selection problem. Given the linear system 
\[ \dot{x}_i = \sum_{j=1}^n a_{ij} x_j, ~~~ i = 1, \ldots, n,\] the simplest actuator selection problem asks for the smallest possible set of variables to affect with an input in order to achieve a prespecified  control objective. Typical control objectives include controllability of the resulting system or the ability to steer the system subject to an energy constraint. 

Formally, if we choose to affect the set of  variables $\{x_i ~|~ i \in I \}$ then the resulting system-with-input is
\begin{eqnarray} \dot{x}_i & = & \sum_{j=1}^n a_{ij} x_j + u_i, ~~~~~ i \in I. \nonumber \\ 
\dot{x}_i & = & \sum_{j=1}^n a_{ij} x_j, \mbox{      } i \notin I. \label{varact}
\end{eqnarray} and the goal is to choose the set $I$ as small as possible while still satisfying some control objective. More complex versions of actuator selection problem might not allow one to directly affect each variable; rather, one instead assumes that the system can only be affected in several distinct ``sites'' and affecting each site affects some subset of the variables all at once. 

The actuator selection problem received some attention recently (e.g., \cite{mincont, lygeros, ali}), due to the emergence of recent interest in large-scale systems, for example in power networks or systems biology. It may be impractical or uneconomical to steer  large systems by affecting every, or even most, of the variables, and consequently it is natural to ask if the system can be efficiently steered by affecting only very few select variables.

A key property for actual selection problems is { supermodularity}. A formal definition can be found in the next section, but, roughly speaking this is the property that affecting variables runs into diminishing returns; that is to say, affecting a certain variable has less impact on the control objective if more variables have already been affected. 

Supermodularity is important because it can lead to algorithms with rigorous approximation guarantees. For example, an approximate algorithm for actuator selection to render the system controllable based on supermodularity of the dimension of the controllable subspace was given in \cite{mincont}\footnote{Note that although \cite{mincont} did not use the words "supermodularity" or "submodularity,"  some of the steps of the proofs were formulations of  this property.}. 

Supermodularity of a number of a control objectives was studied in the recent papers \cite{lygeros} and \cite{ali}. Specifically, one of the control objectives studied in \cite{lygeros} was the trace of the inverse of the controllability Gramian, which has the interpretation of being proportional to the expected energy to move from the origin to a  random point on the unit sphere (we will refer to this as the {\em average control energy}). It was claimed in \cite{lygeros} that, for a stable system, average control energy is a supermodular function of the set of affected sites. Using similar arguments, the later paper \cite{ali} claimed that (an arbitrarily small perturbation of) average control energy is a supermodular function of the set of affected variables.

{ The purpose of this note is to show that average control energy is not always supermodular, contrary to what is claimed in \cite{lygeros} and \cite{ali}. In other words, there exists a (stable, symmetric) linear system and two sets of variables, $I_1 \subset I_2$ such that average control energy decreases {\em more} when a certain variable is added to the bigger set of actuated variables $I_2$, as compared to the scenario when the same variable is added to the smaller set $I_1$. 

The remainder of this paper is organized as follows. In Section \ref{sec:def}, we give the basic definitions used in the remainder of the paper. The subsequent Section \ref{sec:counter} contains the constructions of linear systems for which average control energy is not supermodula. Finally, Section \ref{sec:concl} concludes with some brief remarks.

\subsection{Notation}  We use the standard notation of letting $e_i$ denote the $i$'th basis vector and $I_k$ to denote the $k \times k$ identity matrix. For a matrix $M$, we will use $M'$ to denote its transpose. The complement of a set $S$ will be denoted by $S^c$. The notation $1_k$ will be used for the column vector of all ones in $\R^k$. Finally, a matrix is called strictly stable if all of its eigenvalues have negative real parts. 

\section{Basic definitions\label{sec:def}} 

\subsection{Average control energy of linear systems} 
Given the linear system 
\begin{equation} \label{linear} \dot{x} = Ax + Bu, \end{equation} and an initial state $x_0$ along with a final state $x_{\rm f}$, we define the control energy $\E( A, B, x_0 \rightarrow x_{\rm f}, T) $ to be the minimal energy $\int_0^T ||u(t)||_2^2 ~ dt$ among all inputs $u:[0,T] \rightarrow \R$ which result in $x(T)=x_{\rm f}$ starting from $x(0) = x_0$. If there is no input which results in $x(T)=x_f$ when $x(0)=x_0$, we will adopt the convention that $\E(A, B, x_0 \rightarrow x_f)$ is infinite.

The quantity $\E(A, B, x_0 \rightarrow x_{\rm f})$ measures the difficulty of steering the system from $x_0$ to $x_f$; obviously it will depend on both the starting point $x_0$ and the final point $x_f$. One way to obtain a measure of the ``difficulty of controllability'' of the entire system is to consider the energy involved in moving the system from the origin to a uniformly random point on the unit sphere, namely \[ \Ea(A,B, T) :=  \int_{||y||_2 = 1} ~\E(A,B, 0 \rightarrow y, T) ~dy. \]

It is easy to see that this quantity can be written in terms of controllability Gramian. Indeed, first we define the controllability Gramian $W(T)$ in the usual way as  \begin{equation} \nonumber W(A,B,T) := \int_0^{T} e^{At} B B' e^{A' t} ~dt, \end{equation} where we will allow $T$ to be equal to $+\infty$ with the proviso that $W(+\infty)$ is well-defined only as long as the matrix $A$ is strictly stable. It is then not difficult to see that
\[ \Ea(A,B,T) = \frac{1}{n} {\rm tr} \left[ W(A, B, T)^{-1} \right]. \] Moreover, if $W(A,B,T)$ is not invertible then $\Ea(A,B,T)$ is infinite.

\subsection{The actuator selection problem} Before giving a formal statement of the actuator selection problem, let us introduce some notation. First, we will need notation for the dimensions of $A$ and $B$; specifically, let us suppose $A \in \R^{n \times n}$ and $B \in \R^{n \times m}$. Then given a set $S \subset \{1, \ldots, m\}$, we denote $B(S)$ to be the matrix in $\R^{n \times |S|}$ composed of the columns of $B$ corresponding to indices in $S$. For example, if $B=I_3$ (the $3 \times 3$ identity matrix), then \[ B(\{1,2\})  = \left( \begin{array}{cc} 1 & 0 \\ 0 & 1 \\ 0 & 0 \end{array} \right). \] We then define 
\[ \Ea(A,B, T,S) := \Ea(A,B(S),T). \] In other words, $\Ea(A, B, T, S)$ is the average control energy at time $T$ when using only the columns of $B$ corresponding to the indices in the set $S$. 

There are many versions of actuator selection problems, but the ones we consider here ask to optimize the function $\Ea(A,B,T,S)$ for fixed $A,B,T$ as a function of the set $S$. For example, a natural problem is to ask for $S$ of smallest  cardinality meeting the energy constraint $\Ea \leq c$ for some real number $c$. 

In the simplest and most natural case, $B$ is the $n \times n$ identity matrix; in that case, we may think of choosing $S$ as corresponding to actuating the variables of the differential equation $\dot{x}=Ax$ as in Eq. (\ref{varact}).
More generally, affecting a system in a given ``site'' may affect a group of variables all-at-once; this is appropriately modeled by a  general matrix $B$, where the choice of adding an index $i$ to $S$ involves affecting all the variables in the $i$'th column of $B$.

\subsection{Set functions}  A function $f: 2^{\{1, \ldots, m\}} \rightarrow \R$ is called nonincreasing if $S_1 \subset S_2$ implies $f(S_1) \geq f(S_2)$. A set function is  called supermodular if $S_1 \subset S_2$ and $a \notin S_2$ implies that 
\begin{equation} \label{smod} f(S_1) - f(S_1 \cup \{a\}) \geq f(S_2) - f(S_2 \cup \{a\}). \end{equation} Intuitively, if the function $f$ is supermodular then adding element $a$ decreases the function less if it is added to the bigger set $S_2$ as compared to the smaller set $S_1$.

A set function is called submodular if its negation is supermodular. 


\section{Average Control Energy May Not Be  Supermodular\label{sec:counter}}

Throughout this section, we will investigate the setup where $A,B,T$ are fixed and $\Ea(A,B,T,S)$ is considered as a function only of the set $S$. It is quite easy to see this function is nonincreasing, i.e., average control energy cannot increase when we actuate more places. 

As discussed earlier, one might further guess that $\Ea(A,B,T,S)$ would be a supermodular function of $S$. Indeed, it seems quite intuitive that the gain from actuating any specific variable runs into diminishing returns as other variables become actuated. Strangely enough, it turns out that this intuition is not correct and we now turn to the main point of this note, which is to construct counterexamples for this intuition. 

\subsection{(Non)supermodularity  of average control energy for strictly stable matrices}

We begin with an example showing that average control energy may not be supermodular even if the system is strictly stable. 

\begin{theorem} There exists a $2 \times 2$ matrix $A$ and a $2 \times 5$ matrix $B$ such that \begin{enumerate} \item $A$ is strictly stable. 
\item $\Ea(A,B(S),+\infty)$ is finite for all nonempty $S$. 
\item  $\Ea(A,B(S), +\infty)$ is not a supermodular function of $S$. \end{enumerate} \label{first} \end{theorem}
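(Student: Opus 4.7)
The plan is to reduce the question to a finite-dimensional algebraic problem and then exhibit a specific configuration of columns. Since $A$ is strictly stable, the infinite-horizon Gramian satisfies
\[ W(A, B(S), \infty) = \sum_{i \in S} M_i, \qquad M_i := \int_0^\infty e^{At} b_i b_i' e^{A't}\, dt, \]
so by the identity stated in the preliminaries, $\Ea(A, B(S), \infty) = \tfrac{1}{2}\, {\rm tr}\bigl[(\sum_{i \in S} M_i)^{-1}\bigr]$. Thus I need to choose a stable $2 \times 2$ matrix $A$ and five vectors $b_1, \ldots, b_5 \in \R^2$ such that each single-column Gramian $M_i$ is invertible (this takes care of condition~2) and such that the set function $S \mapsto {\rm tr}\bigl[(\sum_{i \in S} M_i)^{-1}\bigr]$ violates (\ref{smod}) for some $S_1 \subset S_2$ and $a \notin S_2$.

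For tractability I would first take $A$ diagonal with distinct negative eigenvalues, say $A = {\rm diag}(-1, -2)$. Then a direct computation gives the closed form
\[ M_i = \left( \begin{array}{cc} x_i^2/2 & x_i y_i/3 \\ x_i y_i/3 & y_i^2/4 \end{array} \right), \]
where $b_i = (x_i, y_i)'$, and $\det(M_i) = x_i^2 y_i^2 / 72$ is positive whenever both entries of $b_i$ are nonzero, which I would require of each column. Because $n = 2$, the inversion collapses to the identity ${\rm tr}(W^{-1}) = {\rm tr}(W) / \det(W)$, in which ${\rm tr}(W_S)$ is linear in $S$ while $\det(W_S)$ is a quadratic polynomial in the $M_i$ expressible via $\det(M_i)$ and the pairwise cross terms $(x_i y_j - x_j y_i)^2$. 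So the supermodularity inequality (\ref{smod}) becomes a rational inequality in the ten numbers $\{x_i, y_i\}_{i=1}^{5}$, amenable to direct checking.

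The core step is then the selection of $b_1, \ldots, b_5$ and of a witness triple $(S_1, S_2, a)$ with $S_1 \subset S_2$, $a \notin S_2$, satisfying
\[ \Ea(S_1) - \Ea(S_1 \cup \{a\}) < \Ea(S_2) - \Ea(S_2 \cup \{a\}). \]
A reasonable guiding heuristic is to pick columns producing very anisotropic $M_i$ --- some nearly aligned with $e_1$ (small $|y_i|$) and some nearly aligned with $e_2$ (small $|x_i|$), together with one or two more ``balanced'' columns --- so that the Gramian $W_{S_2}$ has a sharp weak direction which the added column $a$ can strengthen far more than the same addition to $W_{S_1}$. Since the parameter dimension is small and only finitely many subsets need to be considered, this reduces to a finite search, after which the violation is certified by evaluating the closed-form expressions.

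The real obstacle is conceptual rather than computational: the map $X \mapsto {\rm tr}(X^{-1})$ is convex and order-reversing on the positive-definite cone, which is precisely the intuition underlying the (mistaken) supermodularity claims in \cite{lygeros} and \cite{ali}. There is thus no general structural inequality to invoke, and the proof must be by explicit construction. The creative step is isolating a geometry in which the expected diminishing returns of an additional actuator actually fail; once any such tuple is in hand, verifying the theorem is an entirely mechanical check.
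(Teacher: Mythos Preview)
Your reduction is sound and in one respect cleaner than the paper's: by taking $A={\rm diag}(-1,-2)$ with distinct eigenvalues you make each single-column Gramian $M_i$ invertible whenever $x_iy_i\neq 0$, so condition~2 comes for free. The paper instead takes $A=-\tfrac12 I_2$, which gives the pleasant identity $W(S)=\sum_{i\in S} b_ib_i'$ but forces each $M_i$ to be rank one; the paper then has to repair condition~2 by a continuity argument, replacing $A$ with $A_\epsilon=-\tfrac12 I_2+\epsilon\,1_21_2'$ and arguing that the strict inequality survives the perturbation. Your formula for $M_i$ and its determinant are correct.

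The genuine gap is that you have not actually produced the five columns and the witness triple $(S_1,S_2,a)$. For an existence theorem whose content is precisely ``such $A,B$ exist,'' the explicit tuple \emph{is} the proof; the framework alone proves nothing, and you concede this yourself. Your anisotropy heuristic is too vague to guarantee a hit: it is easy to arrange that one direction of $W_{S_2}$ is much weaker than any direction of $W_{S_1}$, but this by itself does not force the \emph{difference} $\Ea(S_2)-\Ea(S_2\cup\{a\})$ to exceed $\Ea(S_1)-\Ea(S_1\cup\{a\})$, because the same weakness inflates $\Ea(S_2)$ as well. The paper's construction is not a blind search but is reverse-engineered from the specific failure of the matrix implication ``$U\preceq V\Rightarrow U^2\preceq V^2$'' (the step where the arguments of \cite{lygeros,ali} break): one starts from a concrete pair $U\preceq V$ with $U^2\not\preceq V^2$, builds $W_1=V^{-1}$, $W_3=U^{-1}-V^{-1}$, and a direction $W_2-W_1=zz'$ with $z$ an eigenvector of $U^2-V^2$ for the positive eigenvalue, so that the derivative of the relevant trace difference is strictly positive at $\gamma=0$; then one realizes these $W_j$ as Gramians. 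If you want to complete your version, you will need either to carry out an analogous backward construction adapted to your $A$, or to exhibit concrete $b_1,\ldots,b_5$ and verify the inequality numerically/symbolically; absent that, the proposal is a correct outline but not a proof.
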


This theorem contradicts Theorem 5 of \cite{lygeros}, which claims that $-\Ea(A,B(S),+\infty)$ is a submodular function of $S$ under the assumptions that (i)  $\Ea(A,B(S),+\infty)$ is finite for all $S$ (ii) $A$ is stable. We mention that later in this paper we will use Theorem \ref{first} to construct a counterexample where $A$ is $6 \times 6$ and $B$ is the $6 \times 6$ identity matrix.

The proof of Theorem \ref{first}, given next, relies primarily on  calculation; since the controllability Gramians involved are $2 \times 2$, this can be done explicitly (though somewhat laboriously). After the proof is concluded, we will discuss the motivation behind the specific choices made within the course of the proof. 

\begin{proof}[Proof of Theorem \ref{first}] 

We first observe that if we can find matrices $A$ and $B$ satisfying the assumptions of the theorem and sets $S_1, S_2, \Delta$ with $S_1 \subset S_2$ and $\Delta \subset S_2^c$ such that 
$\Ea(A,B(S_1),+\infty) - \Ea(A,B(S_1 \cup \Delta))$ is less than $\Ea(A,B(S_2),+\infty) - \Ea(A,B(S_2 \cup \Delta), +\infty)$  then we will have shown that $\Ea(A,B(S),+\infty)$ is not a supermodular function of $S$. Indeed, this is almost identical to the definition of supermodularity with the inequality reversed, with the exception that the set $\Delta$ can now have more than a single element. However, if $\Ea(A,B(S),+\infty)$ were supermodular, we could add the elements of $\Delta$ one by one to $S_1$ and $S_2$ respectively, and obtain that the right-hand side  is at most the left-hand side in the above inequality. 

We next describe how to choose $A,B,S_1, S_2, \Delta$ such that the above inequality holds. We mention again that choices will appear somewhat arbitrary; however, after the proof is over we will explain the intuition behind them.

The matrix $B$ will be $2\times5$ and the matrix $A$ will be $2 \times 2$. Furthermore, let us adopt the notation $b_1, \ldots, b_5$ for the five columns of $B$; each $b_i$ belongs to $\R^2$. 

First, we will set $A=(-1/2)I_2$. Observe that as a consequence of 
this, $$\Ea(A,B,+\infty,S) = \sum_{i \in S} b_i b_i'.$$

Now the columns of $B$ will be determined as follows. Letting $$W_{\rm init} = \left( \begin{array}{cc} 2^8 & 0 \\ 0 & 3 \cdot 2^9 
\end{array} \right)$$ and define $b_1,b_2$ to be the vectors with the property that \[ b_1 b_1' + b_2 b_2' = W_{\rm init}, \] specifically $b_1 = 2^4 e_1, b_2 = \sqrt{3 \cdot 2^9} e_2$. Similarly, let $$W_{\Delta} =  \left( \begin{array}{cc} 5 \cdot 2^9 & -3 \cdot 2^9 \\ -3 \cdot 2^9 & 2^{10} \end{array} \right)$$ and 
let $b_3, b_4$ be vectors such that 
\[ b_3 b_3' + b_4 b_4' = W_{\rm \Delta}. \] Such vectors exist because $W_{\Delta}$ is positive definite (this can be verified by looking at its two principal minors). Finally, we set $b_5 = [1 ~~2^6]'$.

We now claim that  
\begin{footnotesize}
\begin{equation} \label{eineq} \Ea(\{1,2\}) - \Ea(\{1,2,3,4\}) < \Ea( \{1,2,5\}) - \Ea(\{1,2,3,4,5\}), \end{equation} \end{footnotesize}
 where $\Ea(S)$ is used as shorthand for $\Ea(A,B,+\infty,S)$ for the choices of $A,B$ described above.

Indeed, since all the matrices are $2 \times 2$, we can compute both sides exactly. Using the identity \begin{equation} \label{22trace} {\rm tr} \left( \begin{array}{cc} a & b \\ c & d \end{array} \right)^{-1} = \frac{a+d}{ad-bc}, \end{equation} we compute expressions for the left- and right- hand sides of Eq. (\ref{eineq}) in Eq. (\ref{lefth}) and Eq. (\ref{righth}). A bit of calculation reveals that number on the right-hand side of Eq. (\ref{lefth}) equals  ${49}/{14,208}$ while the number of the right-hand side of Eq. (\ref{righth}) equals ${82,017,217}/{23,373,975,296}$, and that the second of these numbers is bigger than the first.   Thus Eq. (\ref{eineq}) follows. \begin{figure*}[!t] \begin{equation} \label{lefth} {\rm tr} \left[ \left( \begin{array}{cc} 2^8 & 0 \\ 0 & 3 \cdot 2^9 \end{array} \right)^{-1} - \left(\left( \begin{array}{cc} 2^8 & 0 \\ 0 & 3 \cdot 2^9 
\end{array} \right) + \left( \begin{array}{cc} 5 \cdot 2^9 & -3 \cdot 2^9 \\ -3 \cdot 2^9 & 2^{10} \end{array} \right)  \right)^{-1} \right] = \frac{7}{2^9 \cdot 3} - \frac{3 \cdot 7}{2^9 \cdot 37}  \end{equation}  \end{figure*}  \begin{footnotesize} \begin{figure*}[!t] \begin{scriptsize}
\begin{equation} \label{righth} {\rm tr} \left[ \left( \left( \begin{array}{cc} 2^8 & 0 \\ 0 & 3 \cdot 2^9 
\end{array} \right) + \left( \begin{array}{cc} 2^0 & 2^6 \\ 2^6 & 2^{12} \end{array} \right) \right)^{-1} - \left(\left( \begin{array}{cc} 2^8 & 0 \\ 0 & 3 \cdot 2^9 
\end{array} \right) + \left( \begin{array}{cc} 2^0 & 2^6 \\ 2^6 & 2^{12} \end{array} \right) + \left( \begin{array}{cc} 5 \cdot 2^9 & -3 \cdot 2^9 \\ -3 \cdot 2^9 & 2^{10} \end{array} \right)  \right)^{-1} \right] = \frac{3 \cdot 13 \cdot 151}{2^9 \cdot 2819} - \frac{9473}{2^9 \cdot 7^2 \cdot 661}  \end{equation} \end{scriptsize} \end{figure*} \end{footnotesize}

We next verify the conditions of the theorem. The matrix $A$ is clearly strictly stable; unfortunately, it is not true that $W(A,B(S),+\infty)$ is always invertible.

 To fix this define  $$ A_{\epsilon} = -\frac{1}{2} I_2 + \epsilon 1_2 1_2',$$ where, recall, $1_2$ is the vector of all-ones in $\R^2$. When $\epsilon$ is positive but small enough, we have that $A_{\epsilon}$ is strictly stable;  furthermore we argue that when $\epsilon$ is small enough,  $W(A,B(S)), +\infty)$ is then invertible for all nonempty $S$. Indeed, by the standard arguments it suffices to show that the controllability matrix $[B(S) ~~ A_{\epsilon} B(S)]$ is invertible for all sets $S$ which contain only a single element.
 Now since $A_{\epsilon}$ is $2 \times 2$, the only way the matrix $[b ~~A_{\epsilon} b]$ could fail to be invertible is if $b=0$ or $b$ was an eigenvector of $A_{\epsilon}$.  Observe that the eigenvectors of $A_{\epsilon}$ are always $[1, ~1]', [1, ~-1]'$, both of which we argue were avoided in our choice of the columns of $B$. Indeed, clearly the first, second, and fifth columns of $B$ are clearly not proportional to either of $[1, ~1]', [1, ~-1]'$. As for the third and fourth columns, these were defined through the property that $b_3 b_3' + b_4 b_4' = W_{\Delta}$, so they can be chosen to be proportional to the eigenvectors of $W_{\Delta}$, and it is easy to verify that neither $[1, ~1]'$ nor $[1, ~-1]'$ is an eigenvector of $W_{\Delta}$.

Finally, since $W(A,B(S),+\infty)$ is a continuous function of the entries of $A$ over the set of strictly stable matrices\footnote{This follows because for strictly stable $A$, $W(A,B(S),+\infty)$ is the unique solution of the linear system equations $A W + W A' + B B'=0.$}, we have that a counterexample may be picked by choosing $\epsilon$ small enough. 

\end{proof}

\noindent {\bf Remark:}  A matrix $B$ constructed according to the above proof is 
\[ B \approx \left( \begin{array}{ccccc} 16 & 0 & 50.5964 &  0 &  1 \\
0 & 39.1918 & -30.3579 & 10.1193 & 64
\end{array} \right) \] Entering this matrix into MATLAB with $A = -(1/2) I_2$ and computing the controllability Gramians
using the ``gram'' command gives that 
\begin{eqnarray*} \Ea(\{1,2\}) - \Ea( \{1,2,3,4\}) & \approx & 0.003449 \\
 \Ea(\{1,2,5\}) - \Ea(\{1,2,3,4,5\}) & \approx & 0.003509
 \end{eqnarray*} providing a numerical verification of non-supermodularity on an example.
 
 \bigskip
 
 \noindent {\bf Remark:} Although we could have simply noted the MATLAB results of the previous remark, the purpose of this paper is to construct counterexamples with rigorous proofs. The proof of Theorem \ref{first} can be verified by a human being (albeit one ready to do several lengthy multiplications and divisions) and does not require any assumption on the correctness of MATLAB's source code, nor is it vulnerable to concern about the effect of round-off error in MATLAB's calculations.

\subsection{Motivation for the proof of Theorem \ref{first}.} We now explain how the counterexample of Theorem \ref{first} was constructed. The final  part -- namely, the perturbation by adding $\epsilon 1_2 1_2'$ to satisfy the conditions of the theorem -- is intuitive enough, as is the choice of $A=(-1/2)I$. The only unintuitive part is the choice of the matrix $B$, in particular through the matrices $W_{\rm init}$ and $W_{\rm Delta}$.  To motivate these choices, we begin by tracing out the problem in the arguments  the papers \cite{lygeros} and \cite{ali}. 

\bigskip

\bigskip

 For simplicity, let us adopt the notation $W(S)$ for $W(A,B(S),+\infty)$. The starting observation in \cite{lygeros} is that 
\[ W(S) = \sum_{i \in S} W(\{i\}), \] which is a  consequence of the definition of the controllability Grammian. Thus given $S_1 \subset S_2$ and $a \notin S_2$, supermodularity of average control energy is equivalent to the following inequality
\begin{eqnarray}  {\rm tr} \left[ W(S_1)^{-1} - \left( W(S_1) + W(\{a\}) \right)^{-1} \right] & \geq & \nonumber \\  {\rm tr} \left[ W(S_2)^{-1} - \left( W(S_2) + W(\{a\}) \right)^{-1} \right] && \label{incorrect} \end{eqnarray}
 The authors of \cite{lygeros} adopt the following approach. They define 
\[ W(\gamma) = W(S_1) + \gamma \left( W(S_2) - W(S_1) \right) \] and consider the function
\[ f(\gamma) = {\rm tr} \left[ W(\gamma))^{-1} - \left( W(\gamma) + W((\{a\}) \right)^{-1} \right]. \] If it could be shown that $f(\gamma)$ is nonincreasing over the range $[0,1]$, this would imply Eq. (\ref{incorrect}) and complete the proof. 
To show this, one can compute the derivative $f'(\gamma)$ which, via a standard computation, turns out to be equal to
\begin{footnotesize}
\begin{equation} \label{gammaderivative}f'(\gamma) = {\rm tr} \left[ \left(   \left( W(\gamma) + W(\{a\}) \right)^{-2} - W(\gamma)^{-2} \right) (W(S_2) - W(S_1)) \right] \end{equation}
\end{footnotesize} So far the argument is correct, and the error in \cite{lygeros} comes in the subsequent assertion that indeed  $f'(\gamma) \leq 0$. For this assertion to hold, we would need to have  that \begin{equation} \label{squares} \left( W(\gamma) + W(\{a\}) \right)^{-2} - W(\gamma)^{-2} \preceq 0,\end{equation} and use the fact that the product of nonnegative definite and a nonpositive definite matrix has nonpositive trace. Unfortunately, Eq. (\ref{squares}) is not always correct. 

Indeed, to obtain Eq. (\ref{squares}), one must rely on the statement that ``$U \preceq V$ implies $U^2 \preceq V^2$.'' Though it is somewhat counter-intuitive, in fact this implication may not hold. This is a rather subtle point, as $U \preceq V$ does imply $U^{\alpha} \leq V^{\alpha}$ when $\alpha \in [0,1]$ (this is the so-called L$\ddot{o}$wner-Heinz inequality) but in general this implication does not necessarily hold if $\alpha > 1$ (see \cite{carlen} for more on this, and we will also discuss it below). The proof of a related assertion in \cite{ali} is  similar and suffers from the same problem.

\bigskip

\bigskip

The above discussion presents a natural way to construct a counterexample to supermodularity of average control energy: we will work backwards from the proof above; first we will construct a counterexample to the last inequality, then the one before it, and so on until we reach a counterexample to the supermodularity. 

Specifically, first we will start with matrices $U$ and $V$ such that $U \preceq V$ but it is not true that $U^2 \preceq V^2$. Secondly, we will use these matrices to come up with positive definite $2 \times 2$ matrices $W_1, W_2, W_3$ with $W_1 \preceq W_2$ such that the function \begin{footnotesize}
\begin{equation} \label{gdef} g(\gamma) = {\rm tr} \left[  \left( W_1 + \gamma (W_2 - W_1)  \right)^{-1} - \left( W_1 + \gamma (W_2 - W_1) + W_3 \right)^{-1} \right] \end{equation} \end{footnotesize} satisfies $g'(0)> 0$. Comparing this to Eq. (\ref{gammaderivative}) we see that after we write the matrices $W_1, W_2, W_3$ are controllability Grammians, we will be in the situation where moving in the direction of the bigger set leads to a larger decrease when adding the variables giving rise to $W_3$ as the controllability Grammian. In the last step, we will use this to construct a counterexample. 

We next discuss the details of each of these steps. 

\begin{itemize} \item There are a number of choices of $U,V$ such that $U \preceq V$ but $U^2 \not \preceq V^2$ that can be taken from the literature (see e.g., \cite{carlen}); here, we belabor the point a little by describing how to choose $U,V$ corresponding to a certain geometrical intuition. 

It is natural to associate positive semi-definite matrices with ellipses; to each positive definite matrix $M$ we associate the ellipse $E(M) = \{ x ~|~ x^T M^{-1} x \leq 1 \}$. This ellipse has principal axes corresponding to the eigenvectors and the length of each principal axis is the square root of the corresponding eigenvalue. The inequality $M_1 \preceq M_2$ is then the same as the inclusion $E(M_1) \subset E(M_2)$. Furthermore, $E(M_1^2)$ can be obtained by stretching each principal axis of $E(M)$ proportionally to the square root of the corresponding eigenvalue. 

We thus need to come with two ellipses $E(V) \subset E(U)$ such that after stretching the principal axes as above, $E(V^2) \not \subset E(U^2)$. One solution is to take $E(V)$ very narrow along the x-axis and wide along the y-axis, while taking $E(M)$ tilted at a 45 degree angle. This leads, after some numerical experimentation, to 
\[ U = \left( \begin{array}{cc} 10 & 6 \\ 6 & 10 \end{array} \right),  ~~~ V = \left( \begin{array}{cc} 80 & 0 \\ 0 & 11 \end{array} \right). \]

\bigskip

\item 

Our goal in this step is to come up with positive-semidefinite matrices $W_1, W_2, W_3$ with $W_1 \preceq W_2$ such that  
\begin{equation} g'(0) = {\rm tr} \left[  \left( \left( W_1 + W_3 \right)^{-2} - W_1^{-2}  \right) (W_2 - W_1)   \right] > 0 \label{gineq} \end{equation}

Our starting point is the matrices $U$ and $V$ we obtained in the previous step. First, we come up with a positive semi-definite matrix $Z$ such that  
\begin{equation} \label{uvineq} {\rm tr}((U^2 - V^2)Z) > 0. \end{equation} This is easily done by setting $Z = z z'$ where $z$ is an eigenvector of $U^2-V^2$ corresponding to the positive eigenvalue. 

Next,  
inspecting Eq. (\ref{uvineq}) and Eq. (\ref{gineq}), it is immediate that we can set 
\begin{equation} \label{wdet} W_1 = V^{-1}, W_2  = V^{-1} + Z, W_{3} = U^{-1} - V^{-1}. \end{equation} We are done, as this choice reduces Eq. (\ref{gineq}) to Eq. (\ref{uvineq}). 

\bigskip

\item Now that we have $W_1, W_2, W_3$ such that $g'(0)>0$ (where the function $g(\cdot)$ was defined in Eq. (\ref{gdef})), two issues need to be addressed to construct a counterexample.

First, we need to be able to write these matrices as controllability Grammians. To do this, we first choose  $A=(-1/2)I$. As we have observed earlier in the proof of Proposition \ref{first}, this choice of $A$ with any $B$ leads to $W(S) = \sum_{i \in S} b_i b_i^T$, where $b_i$ is the $i$'th column of $B$. Now consider the matrix  $W_1$ that we have chosen:  we simply obtain $B_1$ such that $B_1 B_1^T = W_1$ via the Cholesky decomposition and put the columns of $B_1$ as the first and second column of the matrix $B$.  Similarly, letting $B_{\rm diff} B_{\rm diff}^T = W_2 - W_1$, we can put the columns of $B_{\rm diff}$ as the next columns of $B$. Finally, $W_3$ will likewise determine the last columns of $B$.
 
Second, with this choice of the matrix $B$, a successful counterexample is equivalent to the assertion $g(1) > g(0)$; this is, of course, not implied by the assertion that $g'(0)>0$. However, $g'(0)>0$ does imply that $g(\widehat{\gamma}) > 0$ for some small enough $\widehat{\gamma}$. That turns out to be enough for us, as we can simply replace $W_2$ by $W_1  + \widehat{\gamma} (W_2 - W_1)$.

\end{itemize}

The example of Theorem \ref{first} was constructed by following these steps, experimenting with the value of $\widehat{\gamma}$, and finally scaling/rounding the resulting numbers. The scaling/rounding is why the controllability Grammians in Theorem \ref{first} have integer entries.

\subsection{(Non)supermodularity for direct variable actuation} 
We now turn to the special case when $B$ is the identity matrix. As we have previously remarked, this case has a special significance as it corresponds to choosing which variables can be directly actuated with an input. 

Before stating our result, we introduce the following convention. Suppose that $f$ is a function from $2^{\{1, \ldots, n\}}$ to $\R \cup \{+\infty\}$. We will say that $f$ is supermodular if Eq. (\ref{smod}) holds for all choices of $S_1 \subset S_2,a \in S_2^c$ such that every term in Eq. (\ref{smod}) is finite.

We now have the following theorem.

\smallskip

\begin{theorem} There exists a strictly stable, symmetric matrix $A \in R^{6 \times 6}$ such that $W(A,I_6(S),+\infty)$ is not a supermodular function of $S$. \label{secondthm}
\end{theorem}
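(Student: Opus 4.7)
The strategy is to reduce Theorem \ref{secondthm} to Theorem \ref{first}: construct a $6\times 6$ symmetric strictly stable matrix $A$ whose identity-$B$ system inherits the non-supermodularity of the $2\times 5$ counterexample $(A_{\epsilon},B_{0})$ of Theorem \ref{first}. The natural idea is to embed the $2$-dimensional state of Theorem \ref{first} as a sub-block of a $6$-dimensional symmetric system and introduce four auxiliary state variables that, when actuated through $I_{6}$, effectively inject the columns of $B_{0}$ into the $2$-dimensional subsystem via coupling.

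Concretely, I would try a block structure
\[ A \;=\; \begin{pmatrix} A_{11} & M \\ M^{\top} & A_{22} \end{pmatrix}, \]
with $A_{11}\in\R^{2\times 2}$ symmetric negative definite, $A_{22}\in\R^{4\times 4}$ symmetric negative definite, and $M\in\R^{2\times 4}$ a coupling matrix whose columns encode (scaled versions of) the ``optional'' columns $b_{3},b_{4},b_{5}$ of $B_{0}$ plus one spare column. By tuning $A_{11},A_{22},M$ -- for instance in a singular-perturbation regime $A_{22}=-\mu I_{4}$ with $\mu$ large and $M$ scaled appropriately -- the four $6$-dimensional Gramians $W(A,I_{6}(\widetilde S),+\infty)$ for $\widetilde S$ in $\{\{1,2\},\{1,2,3,4\},\{1,2,5\},\{1,2,3,4,5\}\}$ can be made to correspond (after a Schur-complement reduction onto the main $2$-dimensional block) to the four $2\times 2$ Gramians appearing in the proof of Theorem \ref{first}, in a sense strong enough that the strict gap in \eqref{eineq} forces a strict non-supermodularity inequality for $\Ea(A,I_{6}(\cdot),+\infty)$ by the continuity of $W\mapsto\mathrm{tr}(W^{-1})$ on the positive definite cone.

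Symmetry of $A$ is automatic, strict stability follows from the combined negative definiteness of $A_{11}$ and $A_{22}$ together with sufficient damping relative to $M$ (Schur complement), and invertibility of the four relevant $6$-dimensional Gramians is a controllability check, possibly supplemented by an arbitrarily small symmetric perturbation of $A$ as in the proof of Theorem \ref{first}. The principal technical obstacle is the singular-perturbation bookkeeping: the $6$-dimensional $\mathrm{tr}(W^{-1})$ involves auxiliary-block and cross-block contributions that do not appear in the $2$-dimensional Gramians of Theorem \ref{first}, and one must verify that these contributions -- while present -- either cancel in the differences defining the supermodularity condition or are dominated by the strict gap of \eqref{eineq}. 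This is a routine continuity argument once the scaling limit is identified, and introduces no fundamentally new ideas beyond those of Theorem \ref{first} itself.
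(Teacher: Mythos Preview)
Your approach is genuinely different from the paper's, and I believe it has a real gap at the step you label ``routine.''

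The paper does not use a block-coupled $A$ with an input-injection mechanism through $M$. Instead it exploits the fact that an orthogonal change of variables preserves control energy: starting from the $2\times 5$ matrix $B$ of Theorem~\ref{first}, it adds a sixth column and normalizes so that the two rows are orthonormal, then extends to a $6\times 6$ orthogonal matrix $B_3$. With $A_1(K)=\mathrm{diag}(-K/2,-K/2,-4,-3,-2,-1)$, the system $(A_1(K),B_3)$ has, in these diagonal coordinates, an upper-left $2\times 2$ Gramian block equal to $K^{-1}$ times a fixed multiple of the Theorem~\ref{first} Gramian. Taking $K$ large makes $\mathrm{tr}(W^{-1})$ dominated by this block, so the non-supermodularity of Theorem~\ref{first} is inherited. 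Finally, the orthogonal change of variables $y=B_3^{\top}x$ turns $B_3$ into $I_6$ and $A_1(K)$ into the symmetric matrix $B_3^{\top}A_1(K)B_3$.

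Your construction runs into a scaling obstruction. In your sets $\widetilde S$, the indices $1,2$ are always present, so the $2\times 2$ block of $B(S)B(S)^{\top}$ is $I_2$ for every $\widetilde S$; the $S$-dependence enters only through which auxiliary coordinates are actuated. With $A_{22}=-\mu I_4$ and $M$ fixed, the Lyapunov equation gives $W_{22}\sim (2\mu)^{-1}P_2(S)$, $W_{12}\sim \mu^{-1}W_{11}M$, and $W_{11}$ equal to the $(A_{11},I_2)$ Gramian plus $O(\mu^{-1})$ corrections. Hence the leading $S$-dependence of $\mathrm{tr}(W^{-1})$ sits in the $4\times 4$ Schur-complement block and is of order $\mu$ (or larger on directions where $P_2(S)$ is rank-deficient), while the $S$-dependence you want---the Theorem~\ref{first} effect in the $2\times 2$ block---appears only at order $\mu^{-1}$ through the coupling. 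The auxiliary contributions therefore neither cancel nor are dominated by the gap in \eqref{eineq}; they swamp it. Rescaling $M$ with $\mu$ to boost the injection either destroys strict stability (the Schur complement $A_{11}+\mu^{-1}MM^{\top}$ blows up) or still leaves the auxiliary block dominant. The paper avoids this entirely by making the \emph{two-dimensional} block the fast one and by arranging, via the orthogonal embedding, that its Gramian is \emph{exactly} proportional to the Theorem~\ref{first} Gramian rather than only through higher-order coupling terms.
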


Recall here our notation: $I_6$ refers to the $6 \times 6$ identity matrix and $I_6(S)$ is the matrix in $\R^{6 \times |S|}$ obtained by picking the columns corresponding to the set $S \subset \{1, \ldots, 6\}$.  

 Theorem \ref{secondthm} contradicts Proposition 2 in \cite{ali}. Indeed, Proposition 2 in \cite{ali} asserts that the function ${\rm tr}\left[ \left( W(A,I(S),t) + \epsilon I \right)^{-1} \right]$ is supermodular, for $\epsilon$ small enough and any $t$. Taking the limit first as $t \rightarrow \infty$ and then as $\epsilon \rightarrow 0$, we obtain a contradiction with Theorem \ref{secondthm}.

We next prove Theorem \ref{secondthm} by showing how the counterexample of Theorem \ref{first} can be embedded into six dimensions.

\begin{proof}[Proof of Theorem \ref{secondthm}]  

Our first observation is that the change of variables $y = P x$ does not change the control energy as long as $P$ is orthonormal. Consequently, it suffices to construct a linear system with an orthonormal input matrix such that $W(\cdot, \cdot,,+\infty)$ is not supermodular, and then Theorem \ref{secondthm} will follow via a change of variables. 

Take the matrix $B$ constructed in that proposition. It is a $2 \times 5$ matrix; add one element to each row such that: (i) the two rows are orthogonal (ii) the two rows have identical norm\footnote{This is always possible, since, if the two elements to be added (one to each row) are denoted as $\alpha$ and $\beta$, then they must satisfy $\alpha \beta = c_1, \alpha^2 - \beta^2 = c_2$, where $c_1$ is the negative inner product of the first two rows of $B$, and $c_2$ is the difference in the squared norm of the first two rows. Since the function $\alpha^2 - (c_1/\alpha)^2$ contains all of $\R$ in its range if $c_1 \neq 0$, such $\alpha$ and $\beta$ can always be found. Finally, it is immediate to verify that indeed $c_1 \neq 0$ (i.e., the first two rows of the matrix $B$ from Theorem \ref{first} are not orthogonal).}. After this is done, normalize both rows to have unit norm. 
We now have a $2 \times 6$ counterexample whose rows are orthonormal. Call the resulting matrix $B_1$. 

Define $A_1(K) = {\rm diag}(-K/2, -K/2, -4, -3, -2, -1)$. Let $B_2$ be the $6 \times 6$ matrix whose first two rows equal $B_1$ and the rest of the rows are equal to zero.  Finally we create $B_3$ by filling in random standard normal entries for the last four rows of $B_2$ and applying Gram-Schmidt to them. With probability one, we will thus have an orthonormal matrix whose first two rows are the same as the rows of $B_1$. 

The motivation for this construction is as follows. We will later choose $K$ to be very large, so that only what happens in the first two coordinates ``matters'' and the supermodularity of the system reduces to the supermodularity of the system in the first two components (which we already know does not hold by Theorem \ref{first}). 

Let us adopt the notation that for a matrix $M$, we will use $\widehat{M}$ to denote its upper left $2 \times 2$ submatrix. Observe that, by construction, we have for any $K$ that
\begin{equation} \label{kratio}  \frac{K \widehat{W}(A_1(K),B_3(S),+\infty)}{ W(A,B(S),+\infty)} = ~~{\rm constant},\end{equation} where the matrices $A$ and $B$ are  taken from Theorem \ref{first}. Note that division of matrices is here understood elementwise. The constant on the right hand side arises from the fact that the first two rows of $B$ were normalized to obtain $B_1$.

We  now argue that, with probability one, when $K$ is large enough we obtain the counterexample we seek in the pair $A_1(K)$ and $B_3$. The key step is the  identity 
\begin{small}
\begin{equation} \label{matrixid} {\rm tr} \left( \begin{array}{cc} U & V \\ X & Y \end{array} \right)^{-1} = {\rm tr}(U^{-1}) + {\rm tr} \left( (Y - X U^{-1} V)^{-1} (I+X U^{-2} V) \right), \end{equation} \end{small} which holds as long as $U$ is invertible and $Y-X U^{-1} V$ is invertible  \cite{taopost}. Now for any set $S$, let us partition the matrix $W(A_1,B_4(S), +\infty)$ as $\left( \begin{array}{cc} U_W & V_W \\ X_W & Y_W \end{array} \right)$ where its top $2 \times 2$ block is $U_W$.

First observe that, by Eq. (\ref{kratio}), for any $K>0$ the matrix $U_W$ is invertible as long as $S$ is any of the sets in the counterexample of Theorem \ref{first} (i.e., $S=\{1,2\}, S=\{1,2,3,4\}, S=\{1,2,5\}, S=\{1,2,3,4,5\}$), since the corresponding $2 \times 2$ matrices were computed to be invertible in the course of the proof of that theorem. 

Moreover, as $K \rightarrow +\infty$, every nonzero entry of $U_W, V_W, X_W$ goes to zero proportionately to $1/K$, while every entry of $Y_W$ is constant. Thus the matrix $Y_W - X_W U_W^{-1} V_W$ approaches $Y_W$. Since $Y_W$ is invertible with probability $1$ (this can be argued by first observing that it suffices to prove this when $S$ is a singleton; and in that case, it follows from the observation that $Y_W$ is a square submatrix of Hilbert matrix\footnote{The Hilbert matrix is the matrix $H$ defined by $H_{ij} = 1/(i+j-1)$. It is known to be invertible, and indeed an explicit expression for its inverse is available; see for example \url{http://mathworld.wolfram.com/HilbertMatrix.html}.} scaled from the left and right by a random diagonal matrix whose entries have a zero probabiity of equalling zero), we obtain that with probability one,  $Y_W - X_W U_W^{-1} V_W$ is invertible when $K$ is large enough.

Consequently, on the right-hand side of Eq. (\ref{matrixid}) the second term is asymptotically negligible compared to the first one and we obtain 
\[ \lim_{K \rightarrow \infty} \frac{{\rm tr} \left( \begin{array}{cc} U_W & V_W \\ X_W & Y_W \end{array} \right)^{-1}}{{\rm tr}(U_W^{-1})} = 1 \]

Thus, as we choose $K$ large enough, the average control energy of the system $\dot{x} = A_1(K) x + B_3(S) u$ will approach, in ratio, ${\rm tr}~ U_W^{-1}$ which is the same as ${\rm tr} \left[ \widehat{W}(A_1, B_3(S), +\infty)^{-1} \right]$. Now applying Eq. (\ref{kratio}), we see that the ratio of the average control energy of $\dot{x} = A_1(K) x + B_3(S) u$  to 
$K {\rm tr}(W(A,B(S), +\infty)^{-1}$ approaches a constant as $K \rightarrow +\infty$ for any of the sets $S$ used in the proof of Theorem \ref{first}.

In other words, letting $c$ denote the constant of the previous paragraph, we have that as $K \rightarrow +\infty$, the average control energy of $\dot{x}=A_1(K)+B_3(S)$ is $cK{\rm tr}(W(A,B(S), +\infty)^{-1} (1 + o_K(1))$, where $o_K(1)$ denotes something that approaches zero as $K \rightarrow +\infty$. Recall that here $A,B$ are the matrices from Theorem \ref{first}.

 We have already shown, however, the lack of supermodularity for 
${\rm tr}(W(A,B(S), +\infty)^{-1}$ for these sets in Theorem \ref{first}, and thus we conclude that choosing $K$ large enough we can obtain a counterexample to the average control energy $W(A_1,B_3(S),+\infty)$ using the same sets.

\end{proof}

\noindent {\bf Remark:} The matrix $B$ constructed in this example is not uniquely defined, since it relies on the generation of random numbers. However, one run in MATLAB using the ``randn'' command to generate random Gaussians, with the choice of $K=10^4$ yields (after rounding),
\[ A = \left( \begin{array}{cccccc}
-182 & 0 & -565 & 0 & -11 & -736 \\
0 & -1075 & 831 & -276 & -1752 & -612 \\
-565 & 831 & -2435 & 214 & 1321 & -1853 \\
0 & -276 & 214 & -73 & -453 & -158 \\
-11 & -1752 & 1321 & -453 & -2864 & -1045 \\ 
-736 & -612 & -1853 & -158 & -1045 & -3381 
\end{array} \right) 
\] with, of course, $B$ being the $6 \times 6$ identity matrix. Using the MATLAB ``gram'' command to compute controllability Gramians, we obtain
\begin{eqnarray*} \Ea(\{1,2\}) - \Ea(\{1,2,3,4\}) & \approx & 2.50 \cdot 10^5 \\
 \Ea( \{1,2,5\}) - \Ea(\{1,2,3,4,5\}) & \approx & 2.52 \cdot 10^5
 \end{eqnarray*} providing a numerical confirmation of non-supermodularity for this example. 
 
  \smallskip
 
 \noindent {\bf Remark:} It is possible to slightly modify our construction to obtain a $5 \times 5$ counterexample (indeed, perusing through the details of Theorem \ref{first}, it is easy to see that one of the columns of the matrix $B$ is unnecessary). We omit the details.

\section{Conclusion\label{sec:concl}}

We have constructed two examples showing that average control energy is not necessarily a supermodular function of the set of actuated sites or actuated variables. These results are relevant for the problem of actuator placement with average energy constraints, in that they show that a key property that has been used to develop approximation algorithms in other contexts is not available here.  

Indeed in \cite{mincont} it was shown that if actuating the variables in the set $S^*$ renders a system controllable, then one can find in polynomial time a set of size $O(|S^*| \log n)$ that also renders the system controllable, and moreover this is the best possible guarantee one can obtain in polynomial time unless $P=NP$. The proof was based on the submodularity of the dimension of the controllable subspace. It is at present unclear what the analogous best possible guarantee one can attain (in polynomial time) when the control metric is not controllability of the system but rather average control energy.

%




\end{document}